\DeclareMathAlphabet{\mathpzc}{OT1}{pzc}{m}{it}
\newcommand{\subfiguretitle}[1]{{\scriptsize{#1}} \\}
\newcommand{\R}{\mathbb{R}}                                      % real numbers
\newcommand{\ts}{\hspace*{0.1em}}                                % thin space
\newcommand{\mc}[2][]{\mathpzc{#2}{\smash[t]{\mathstrut}}_{#1}}  % chancery font
\providecommand{\norm}[1]{\left\lVert #1 \right\rVert}           % norm
\newcommand\xqed[1]{\leavevmode\unskip\penalty9999 \hbox{}\nobreak\hfill \quad\hbox{#1}}
\newcommand{\exampleSymbol}{\xqed{$\triangle$}}
\DeclareMathOperator{\diag}{diag}
\newtheorem{theorem}{Theorem}[section]
\newtheorem{lemma}[theorem]{Lemma}
\newtheorem{proposition}[theorem]{Proposition}
\newtheorem{definition}[theorem]{Definition}
\theoremstyle{definition}
\newtheorem{example}[theorem]{Example}
\newtheorem{remark}[theorem]{Remark}
\newenvironment{proof}{\textit{Proof:}}{\hfill$\square$}
\renewcommand*\env@matrix[1][*\c@MaxMatrixCols c]{%
  \hskip -\arraycolsep
  \let\@ifnextchar\new@ifnextchar
  \array{#1}}
\begin{document}
\begin{frontmatter}

\title{Data-driven network analysis \\ using local delay embeddings}

\author[1]{Stefan Klus}
\author[2]{Hongyu Zhu}

\address[1]{Heriot--Watt University, Edinburgh, UK \\ (e-mail: s.klus@hw.ac.uk).}
\address[2]{RTX Technology Research Center, East Hartford, CT, USA \\ (e-mail: HongyuAlice.Zhu@rtx.com)}

\begin{abstract} % Abstract of 50--100 words
Data-driven methods for the identification of the governing equations of dynamical systems or the computation of reduced surrogate models play an increasingly important role in many application areas such as physics, chemistry, biology, and engineering. Given only measurement or observation data, data-driven modeling techniques allow us to gain important insights into the characteristic properties of a system, without requiring detailed mechanistic models. However, most methods assume that we have access to the full state of the system, which might be too restrictive. We show that it is possible to learn certain global dynamical features from local observations using delay embedding techniques, provided that the system satisfies a localizability condition---a property that is closely related to the observability and controllability of linear time-invariant systems.
\end{abstract}

\begin{keyword}
Time-delay embeddings, Koopman operator theory, local observations.
\end{keyword}

\end{frontmatter}
%===============================================================================

\section{Introduction}

Time-delay embeddings, theoretically justified by Takens' embedding theorem in the context of attractor reconstruction, are used in many application areas to identify more powerful models by including history data \citep{Pan20}. The required number of delays, however, is typically not known in advance and depends strongly on the system. The question we will address is how much information about global properties of the system is encoded in local observations, assuming we cannot access the full state. This is, for instance, the case when we have to deal with networked systems. The dynamical system could represent a decentralized network of agents. Each agent can observe and update its own state---by requesting information from adjacent agents---, but does not have access to the global state of the system or the network structure itself. The analysis of spectral properties of graphs using sparse local measurements, called \emph{spectral network identification}, is discussed in \citet{Mauroy17}. Local delay embeddings have also been used for analyzing fluid flows based on local PIV measurements in \citet{YZZWL21}. Furthermore, a decentralized spectral clustering method that relies on propagating waves through the graph and retrieving the clustering information with the aid of a local dynamic mode decomposition was proposed in \citet{ZKS22}. In this paper, we will analyze under which conditions we can expect the localized system to contain information about characteristic global properties such as inherent timescales or associated modes and focus mostly on linear systems. While linear systems have already been studied in detail, see, e.g., \citet{Willems86}, we show how these approaches can be extended to nonlinear systems by embedding the dynamics into a higher-dimensional feature space, provided that finite-dimensional Koopman-invariant subspaces exist. Moreover, we utilize recently developed data-driven methods for approximating the Koopman operator to show that spectral properties can be estimated from local trajectory data. The notion of \emph{localizability} allows us to assess whether or not it is possible to locally predict the global evolution of the dynamical system and also whether we can trust decentralized spectral clustering algorithms. All the results will be illustrated with the aid of guiding examples.

\section{Koopman operator theory}
\label{sec:Koopman}

Let $ \mathbb{X} \subset \mathbb{R}^n $ denote the state space and $ \Phi \colon \mathbb{X} \to \mathbb{X} $ a function. For a discrete deterministic dynamical system
\begin{equation*}
    x^{(k+1)} = \Phi(x^{(k)}),
\end{equation*}
the Koopman operator $ \mathcal{K} \colon L^\infty(\mathbb{X}) \to L^\infty(\mathbb{X}) $ applied to an observable $ f \in L^\infty(\mathbb{X}) $ is defined by
\begin{equation*}
    [\mathcal{K} f](x) = f(\Phi(x)),
\end{equation*}
see \citet{Ko31, LaMa94, RMBSH09} for more details. The Koopman operator and its eigenvalues, eigenfunctions, and modes are typically estimated from trajectory data. Given one long trajectory $ \{ x^{(0)}, x^{(1)}, \dots, x^{(m)} \} $ of the dynamical system $ \Phi $, we define the data matrices $ X, Y \in \R^{n \times m} $, with
\begin{align*}
    X &=
    \begin{bmatrix}
        x^{(0)} & x^{(1)} & \dots & x^{(m-1)}
    \end{bmatrix},\\
    Y &=
    \begin{bmatrix}
        x^{(1)} & x^{(2)} & \dots & x^{(m)}
    \end{bmatrix}.
\end{align*}
One of the simplest and most frequently used methods to estimate the Koopman operator is \emph{dynamic mode decomposition} (DMD) \citep{Schmid10, TRLBK14}. The underlying idea is to approximate the dynamics by a linear system of the form $ x^{(k+1)} = C \ts x^{(k)} $, where the matrix $ C := Y \ts X^+ \in \R^{n \times n} $ minimizes the Frobenius norm error $ \norm{C \ts X - Y}_F $. While this can be regarded as an approximation of the Koopman operator using linear basis functions, many nonlinear extensions using either explicitly given dictionaries \citep{WKR15, KKS16}, kernels \citep{WRK15, KSM19}, or neural networks \citep{LDBK17, MPWN18} exist. Instead of applying nonlinear transformations to the data, another popular approach to obtain more accurate models is to utilize \emph{delay embeddings} \citep{AM17, Pan20}. Given one long trajectory, the data-driven methods are then not directly applied to the matrices $ X $ and $ Y $, but to augmented data matrices $ X_s, Y_s \in \R^{(s \ts n) \times (m-s+1)} $ of the form
\begin{align*}
    X_s =
    \begin{bmatrix}
        x^{(0)} & x^{(1)} & \dots & x^{(m-s)} \\
        x^{(1)} & x^{(2)} & \dots & x^{(m-s+1)} \\
        \vdots & \vdots & \ddots & \vdots \\
        x^{(s-1)} & x^{(s)} & \dots & x^{(m-1)}
    \end{bmatrix}, \\
    Y_s =
    \begin{bmatrix}
        x^{(1)} & x^{(2)} & \dots & x^{(m-s+1)} \\
        x^{(2)} & x^{(3)} & \dots & x^{(m-s+2)} \\
        \vdots & \vdots & \ddots & \vdots \\
        x^{(s)} & x^{(s+1)} & \dots & x^{(m)}
    \end{bmatrix},
\end{align*}
where $ s $ determines the number of previous time points used for prediction. The goal now is to estimate a matrix $ C_s \in \R^{(s \ts n) \times (s \ts n)} $ from simulation data such that $ \norm{C_s X_s - Y_s}_F $ is minimized. Since $ s-1 $ components of the augmented vectors contained in $ X_s $ and $ Y_s $ are identical, we impose the following structure, which simplifies the regression problem:
\begin{equation*}
    C_s =
    \begin{bmatrix}
        & I \\
        & & I \\
        & & & \ddots \\
        & & & & I \\
        W_0 & W_1 & W_2 & \dots & W_{s-1}
    \end{bmatrix}.
\end{equation*}
We then obtain a model of the form
\begin{equation*}
    x^{(k+s)} = W_{s-1} \ts x^{(k+s-1)} + W_{s-2} \ts x^{(k+s-2)} + \dots + W_0 \ts x^{(k)}.
\end{equation*}
Relationships with similar approaches such as autoregressive models are described in \citet{Pan20}.

\section{Local delay embeddings}

We will now consider \emph{local delay embeddings} assuming that we cannot observe the full state of the system.

\subsection{Linear dynamical systems}

Let the linear dynamical system be given by
\begin{equation} \label{eq:Discrete linear system}
    x^{(k+1)} = A \ts x^{(k)},
\end{equation}
where $ A \in \R^{n \times n} $ is a matrix. By interpreting the matrix $ A $ as a weighted adjacency matrix, we can view this as a dynamical system defined on a directed graph.

\begin{definition}[Dependency graph]
Given a discrete dynamical system of the form \eqref{eq:Discrete linear system}, the associated \emph{dependency graph} is defined by $ \mc{G}(A) = (\mc{X}, \mc{E}) $, with vertices $ \mc{X} = \{ \mc[1]{x}, \dots, \mc[n]{x} \} $ and edges $ \mc{E} = \{ (\mc[j]{x}, \mc[i]{x}) \mid a_{ij} \ne 0 \} $.
\end{definition}

The dependency graph describes which variables $ x_j $ are required for the update of the variable $ x_i $. For linear systems, the adjacency matrix of the dependency graph is simply determined by the nonzero entries of $ A^\top $. The dependency graph can, however, be easily extended to nonlinear dynamical systems using the structure of the Jacobian of the system.

\subsection{Localizability}

Our goal is to identify global properties of the dynamical system using only local information. To this end, we write the matrix $ A $ as
\begin{equation*}
    A =
    \begin{bmatrix}
        a_{11} & a_{12}^\top \\
        a_{21} & A_{22}
    \end{bmatrix},
\end{equation*}
with $ a_{11} \in \R $, $ a_{12}, a_{21} \in \R^{n-1} $, and $ A_{22} \in \R^{(n-1) \times (n-1)} $. Similarly, we split $ x^{(k)} $ into
\begin{equation*}
    x^{(k)} =
    \begin{bmatrix}
        u^{(k)} \\
        v^{(k)}
    \end{bmatrix},
\end{equation*}
where $ u^{(k)} \in \R $ and $ v^{(k)} \in \R^{n-1} $. Assume now that we can only observe the evolution of $ u^{(k)} $, but do not have access to $ v^{(k)} $. That is, without loss of generality, we consider only the dynamics pertaining to the vertex $ \mc[1]{x} $ of the graph.

\begin{definition}[Localizability]
Let $ R \in \R^{(n-1) \times (n-1)} $ be defined by
\begin{equation*}
    R =
    \begin{bmatrix}
        a_{12}^\top \\
        a_{12}^\top \ts A_{22} \\
        \vdots \\
        a_{12}^\top \ts A_{22}^{n-2} \\
    \end{bmatrix}.
\end{equation*}
We call the discrete dynamical system \eqref{eq:Discrete linear system} \emph{localizable in vertex} $ \mc[1]{x} $ if $ \rank(R) = n - 1 $. Localizability in any other vertex can be defined analogously. We say that a system is \emph{localizable everywhere} if it is localizable in every vertex.\!\footnote{The term \emph{localizability} has also been used for network localization problems, where the goal is to determine the locations of nodes in a network, and should not be confused with our definition.}
\end{definition}

Note that localizability is closely related to the notions of \emph{observability} and \emph{controllability} used in control theory, see, e.g., \citet{Gajic96}. The meaning of the matrix $ R $ will become clear in Proposition~\ref{pro:Locally defined system}.

\begin{lemma}[Hautus test]
The system is localizable in $ \mc[1]{x} $ if and only if for each eigenvalue $ \lambda $ of $ A_{22} $ it holds that
\begin{equation*}
    \begin{bmatrix}
        \lambda \ts I - A_{22} \\
        a_{12}^\top
    \end{bmatrix}
    x
    = 0
\end{equation*}
implies that $ x = 0 $. That is, the matrix has rank $ n - 1 $ for all eigenvalues $ \lambda $.
\end{lemma}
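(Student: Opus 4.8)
The plan is to observe that $R$ is precisely the observability matrix of the pair $(A_{22}, a_{12}^\top)$, so the claim is the classical Popov--Belevitch--Hautus criterion, and I would establish both implications by contraposition.

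For one direction, suppose the Hautus condition fails: there exist an eigenvalue $\lambda \in \C$ of $A_{22}$ and a vector $x \neq 0$ with $(\lambda I - A_{22})x = 0$ and $a_{12}^\top x = 0$. A short induction then gives $a_{12}^\top A_{22}^{k} x = \lambda^{k}\, a_{12}^\top x = 0$ for every $k \ge 0$, hence $R x = 0$. Because the rank of the real matrix $R$ is the same computed over $\R$ or over $\C$, this nonzero (possibly complex) null vector forces $\rank(R) \le n - 2$, so the system is not localizable in $\mc[1]{x}$.

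Conversely, assume $\rank(R) < n - 1$ and put $\mathcal{N} = \ker R \subseteq \C^{n-1}$, which is nontrivial. The crucial step is to show that $\mathcal{N}$ is $A_{22}$-invariant: for $x \in \mathcal{N}$ one has $a_{12}^\top A_{22}^{k}(A_{22} x) = a_{12}^\top A_{22}^{k+1} x = 0$ immediately for $k \le n - 3$, while for $k = n - 2$ the Cayley--Hamilton theorem expresses $A_{22}^{n-1}$ as a linear combination of $I, A_{22}, \dots, A_{22}^{n-2}$, so $a_{12}^\top A_{22}^{n-1} x$ is a linear combination of the already vanishing quantities $a_{12}^\top A_{22}^{j} x$ with $0 \le j \le n-2$; thus $A_{22} x \in \mathcal{N}$. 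Then $A_{22}|_{\mathcal{N}}$, a linear operator on a nonzero finite-dimensional space, has an eigenvalue $\lambda$ with eigenvector $x \in \mathcal{N} \setminus \{0\}$; this $\lambda$ is also an eigenvalue of $A_{22}$, and $a_{12}^\top x = 0$ since $x$ lies in the kernel of the first block row of $R$. Hence $\begin{bmatrix} \lambda I - A_{22} \\ a_{12}^\top \end{bmatrix} x = 0$ with $x \neq 0$, so the Hautus condition fails, completing the contrapositive.

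Finally, I would record the two routine remarks that match the wording of the statement: when $\lambda$ is not an eigenvalue of $A_{22}$, the matrix $\lambda I - A_{22}$ is invertible, so the stacked matrix automatically has rank $n - 1$, which is why it suffices to test only the eigenvalues of $A_{22}$; and truncating the powers of $A_{22}$ in $R$ at exponent $n - 2$ loses nothing, again by Cayley--Hamilton. I expect the only genuinely delicate point to be the bookkeeping between $\R$ and $\C$ --- one must pass to $\C$ to guarantee that the eigenvalue, eigenvector, and the invariant subspace $\mathcal{N}$ exist, while relying on the fact that matrix rank is insensitive to this field extension.
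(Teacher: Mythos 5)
Your proof is correct, but it takes a genuinely different route from the paper: the paper's entire proof is a one-line citation of the classical Popov--Belevitch--Hautus test applied to the pair $(A_{22}, a_{12}^\top)$, treating the equivalence between the rank condition on the observability matrix $R$ and the Hautus rank condition as known. You instead prove that equivalence from first principles. Both directions of your argument are sound: the easy direction correctly propagates $a_{12}^\top A_{22}^k x = \lambda^k a_{12}^\top x = 0$ down the rows of $R$, and the converse correctly establishes the $A_{22}$-invariance of $\ker R$ via Cayley--Hamilton (the only power needing it being $A_{22}^{n-1}$, since $A_{22}$ is $(n-1)\times(n-1)$) before extracting an eigenvector of the restricted operator. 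Your attention to the $\R$ versus $\C$ bookkeeping --- rank being invariant under field extension while the eigenvector must be sought over $\C$ --- and your closing remark explaining why only eigenvalues of $A_{22}$ need be tested are both points the paper leaves implicit by deferring to the literature. What the paper's approach buys is brevity and a clean pointer to the control-theoretic provenance of the result; what yours buys is a self-contained argument that makes the lemma independent of the cited reference.
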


\begin{proof}
This is simply an application of the Hautus test \citep{Hautus69} to the localizability setting.
\end{proof}

We seek to recover the dynamics using local information only. The following lemma shows that this is possible if the system is localizable.

\begin{proposition} \label{pro:Locally defined system}
Let the system be localizable in $ \mc[1]{x} $. Using a local delay embedding of length $ s = n $, i.e.,
\begin{align*}
    X_n =
    \begin{bmatrix}
        u^{(0)} & u^{(1)} & \dots & u^{(m-n)} \\
        u^{(1)} & u^{(2)} & \dots & u^{(m-n+1)} \\
        \vdots & \vdots & \ddots & \vdots \\
        u^{(n-1)} & u^{(n)} & \dots & u^{(m-1)}
    \end{bmatrix}, \\
    Y_n =
    \begin{bmatrix}
        u^{(1)} & u^{(2)} & \dots & u^{(m-n+1)} \\
        u^{(2)} & u^{(3)} & \dots & u^{(m-n+2)} \\
        \vdots & \vdots & \ddots & \vdots \\
        u^{(n)} & u^{(n+1)} & \dots & u^{(m)}
    \end{bmatrix},
\end{align*}
we can derive a linear system that correctly predicts the evolution of $ u^{(k)} $ without requiring~$ v^{(k)} $. That is, the localized system replicates the behavior of the global system.
\end{proposition}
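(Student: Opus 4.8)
The plan is to show that the scalar sequence $u^{(k)}$ satisfies a linear recurrence of order $n$ whose coefficients are determined by $A$ — equivalently, that the $s=n$ delay embedding closes the dynamics. First I would write out the observed quantity in terms of the full state. Since $x^{(k+1)} = A\,x^{(k)}$, we have $x^{(k)} = A^k x^{(0)}$, and $u^{(k)} = e_1^\top A^k x^{(0)}$, where $e_1$ is the first standard basis vector. The key is the Cayley--Hamilton theorem: if $p(\lambda) = \lambda^n - c_{n-1}\lambda^{n-1} - \dots - c_1 \lambda - c_0$ is the characteristic polynomial of $A$, then $A^n = c_{n-1} A^{n-1} + \dots + c_1 A + c_0 I$, hence for every $k$,
\begin{equation*}
    u^{(k+n)} = c_{n-1}\,u^{(k+n-1)} + \dots + c_1\,u^{(k+1)} + c_0\,u^{(k)}.
\end{equation*}
This already shows the delayed scalar data obeys a closed linear recurrence, so taking $W_{j} = c_j$ (scalars) gives a companion-form model $C_n$ of the type in Section~\ref{sec:Koopman} that reproduces $u^{(k)}$ for all $k$.

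The remaining point — and where localizability enters — is to argue that this recurrence is genuinely realized by the data, i.e.\ that DMD applied to $X_n, Y_n$ recovers it, which requires $X_n$ to have full row rank $n$ (for sufficiently rich/long trajectories). I would connect this to the matrix $R$: expanding $u^{(k)} = a_{11} u^{(k-1)} + a_{12}^\top v^{(k-1)}$ and iterating, one sees that the vector of delays $\bigl(u^{(k)}, u^{(k+1)}, \dots, u^{(k+n-1)}\bigr)^\top$ is an invertible affine function of $\bigl(u^{(k)}, v^{(k)}\bigr)$ precisely when the observability-type matrix built from $a_{12}^\top$ and $A_{22}$ — that is, $R$ together with the $u$-component — has rank $n$; and $\rank(R) = n-1$ is exactly the condition that the $v$-block is reconstructible from the delays. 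So localizability guarantees that the delay-coordinate map is a diffeomorphism onto its image, hence the augmented state $X_n$ spans $\R^n$ generically and the regression problem $\min\norm{C_n X_n - Y_n}_F$ has the companion matrix above as its (unique, for generic data) solution.

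The main obstacle I anticipate is making the ``correctly predicts'' claim precise and clean: one must state what genericity/persistency-of-excitation assumption on the trajectory $\{x^{(0)},\dots,x^{(m)}\}$ is needed so that $X_n$ has rank $n$, and then verify that localizability is what makes such initial conditions generic rather than measure-zero. The Cayley--Hamilton half is essentially immediate; the subtle half is the equivalence between $\rank(R)=n-1$ and invertibility of the delay-coordinate change of variables, which I would establish by writing the map $\bigl(u,v\bigr)\mapsto\bigl(u^{(0)},\dots,u^{(n-1)}\bigr)$ as multiplication by a block matrix whose lower-right $(n-1)\times(n-1)$ block is (a row-permutation of) $R$, and observing its determinant is nonzero iff $\rank(R)=n-1$. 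Once that is in hand, the statement follows.
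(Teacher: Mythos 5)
Your proposal is correct, but it takes a genuinely different route from the paper's. The paper works forward from the block decomposition: it iterates the update to obtain $u^{(k+s)} = a_{11}\ts u^{(k+s-1)} + \sum_{l=0}^{s-2} a_{12}^\top A_{22}^{l}\ts a_{21}\ts u^{(k+s-2-l)} + a_{12}^\top A_{22}^{s-1} v^{(k)}$, collects the first $n-1$ of these identities into the linear system $R\ts v^{(k)} = b^{(k)}$, uses $\rank(R)=n-1$ to solve uniquely for $v^{(k)}$ in terms of $u^{(k)},\dots,u^{(k+n-1)}$, and substitutes into the equation for $u^{(k+n)}$ to close the recurrence. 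Your Cayley--Hamilton argument reaches the closed recurrence more directly, with the bonus that it identifies the coefficients $w_i=-\alpha_i$ immediately (the paper only establishes this later, in a separate lemma), and it makes transparent that a degree-$n$ recurrence for $u^{(k)}$ exists for \emph{every} linear system, localizable or not---a point the paper only makes informally in the text following the proposition. What the paper's construction buys instead is the explicit reconstruction of the hidden state $v^{(k)}=R^{-1}b^{(k)}$, which the surrounding discussion uses to claim that the global dynamics can be recovered. Your second half---localizability as invertibility of the delay-coordinate map, hence identifiability of $C_n$ from data---is the right way to see where the hypothesis genuinely matters: the columns of $X_n$ are $O\ts x^{(k)}$ with $O$ the observability matrix of the pair $(A, e_1^\top)$, so $\rank(X_n)\le\rank(O)$ and full row rank is impossible without localizability, while with it a generic trajectory suffices. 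Two minor imprecisions, neither fatal: the lower-right $(n-1)\times(n-1)$ block of $O$ is not a row permutation of $R$ but becomes $R$ only after a unipotent row reduction that subtracts the $u$-contributions of earlier rows, and the delay map is linear rather than affine. The persistency-of-excitation caveat you flag is real but is equally unaddressed in the paper's own proof, which establishes existence of the localized system rather than its recovery by DMD.
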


The proof can be found in the appendix. If a system is localizable, this means that we can reproduce the dynamics of $ u^{(k)} $ using only local information and that the global dynamics can be reconstructed, provided that $ R $ and $ b^{(k)} $ (defined in the proof) are known. If the system is not localizable, we might still be able to reproduce the local dynamics, but lose information about the global behavior. We will estimate the matrices representing the localized dynamics from local trajectory data using DMD.

\begin{example} \label{ex:non-localizable systems}
Let us consider the case $ n = 3 $ and identify systems that are not localizable in $ \mc[1]{x} $. That is, we have a discrete dynamical system of the form $ x^{(k+1)} = A \ts x^{(k)} $, with
\begin{equation*}
    A =
    \left[\begin{array}{c|cc}
        a_{11} & a_{12} & a_{13} \\ \hline
        a_{21} & a_{22} & a_{23} \\
        a_{31} & a_{32} & a_{33}
    \end{array}\right].
\end{equation*}
Note that the matrix $ A $ is now defined element-wise and not block-wise. Since $ n = 3 $, we have $ v^{(k)} \in \R^2 $ and we want to express the dynamics in terms of $ u^{(k)} $ variables using delay embeddings. We obtain
\begin{equation*}
    R =
    \begin{bmatrix}
        a_{12} & a_{13} \\
        a_{12} \ts a_{22} + a_{13} \ts a_{32} & a_{12} \ts a_{23} + a_{13} \ts a_{33}
    \end{bmatrix}.
\end{equation*}
If $ \rank(R) < 2 $, the system is not localizable in $ \mc[1]{x} $. Examples of such systems and their corresponding dependency graphs---self-loops are omitted for the sake of clarity---are:
\begin{center}
    \begin{minipage}[t]{0.3\linewidth}
        \centering
        \resizebox{1\textwidth}{!}{%
        \begin{tikzpicture}[
                > = stealth, % arrow head style
                semithick, % line style
            ]

            \tikzstyle{every state}=[
                draw = black,
                thick,
                fill = white,
                minimum size = 5mm
            ]

            \node[state] (v1) {1};
            \node[state] (v2) [right=2cm of v1] {2};
            \node[state] (v3) [below right=1cm and 0.85cm of v1] {3};

            \path[<->] (v1) edge node {} (v2);
            \path[->] (v1.-67) edge node {} (v3.157);
            \path[->] (v2.232) edge node {} (v3.38);
        \end{tikzpicture}} \\[1.5ex]
        \resizebox{0.9\textwidth}{!}{$
        A =
        \begin{bmatrix}[rrr]
            \tfrac{3}{5} &  -\tfrac{1}{2} & 0\; \\[1ex]
           -\tfrac{1}{2} &  -\tfrac{3}{5} & 0\; \\[1ex]
           -1 & \tfrac{1}{2} & -\tfrac{1}{2}\;
        \end{bmatrix}
        $}
    \end{minipage}
    \hspace*{1ex}
    \begin{minipage}[t]{0.3\linewidth}
        \centering
        \resizebox{1\textwidth}{!}{%
        \begin{tikzpicture}[
                > = stealth, % arrow head style
                semithick % line style
            ]

            \tikzstyle{every state}=[
                draw = black,
                thick,
                fill = white,
                minimum size = 5mm
            ]

            \node[state] (v1) {1};
            \node[state] (v2) [right=2cm of v1] {2};
            \node[state] (v3) [below right=1cm and 0.85cm of v1] {3};

            \path[<->] (v1) edge node {} (v2);
            \path[<->] (v1.-60) edge node {} (v3.150);
        \end{tikzpicture}} \\[1.5ex]
        \resizebox{0.9\textwidth}{!}{$
        A =
        \begin{bmatrix}[rrr]
            \tfrac{1}{2} & -\tfrac{2}{5} &  \tfrac{2}{5}\; \\[1ex]
           -\tfrac{2}{5} & -\tfrac{1}{2} &  0\;            \\[1ex]
            \tfrac{2}{5} &  0            & -\tfrac{1}{2}\;
        \end{bmatrix}
        $}
    \end{minipage}
    \hspace*{1ex}
    \begin{minipage}[t]{0.3\linewidth}
        \centering
        \resizebox{1\textwidth}{!}{%
        \begin{tikzpicture}[
                > = stealth, % arrow head style
                semithick % line style
            ]

            \tikzstyle{every state}=[
                draw = black,
                thick,
                fill = white,
                minimum size = 5mm
            ]

            \node[state] (v1) {1};
            \node[state] (v2) [right=2cm of v1] {2};
            \node[state] (v3) [below right=1cm and 0.85cm of v1] {3};

            \path[<->] (v1) edge node {} (v2);
            \path[<->] (v1.-60) edge node {} (v3.150);
            \path[<->] (v2.240) edge node {} (v3.30);
        \end{tikzpicture}} \\[1.5ex]
        \resizebox{0.9\textwidth}{!}{$
        A =
        \begin{bmatrix}[rrr]
            1 & 1 & 2\; \\[1ex]
           -1 & -\tfrac{1}{3} & -1\; \\[1ex]
           -1 & -\tfrac{5}{6} & -\tfrac{3}{2}\;
        \end{bmatrix}
        $}
    \end{minipage}
\end{center}
Information about $ x_3 $ does not enter the first two equations of the system on the left-hand side so that the second column of $ R $ is zero. Although the other two graphs are strongly connected or even fully connected, the corresponding systems are not localizable in $ \mc[1]{x} $. \exampleSymbol
\end{example}

\begin{remark}
While the non-localizability of the first system is purely due to the graph structure, perturbing, e.g., the entries $ a_{22} $ or $ a_{33} $ of the other two matrices will render the systems localizable in $ \mc[1]{x} $. We could now define \emph{structural localizability}, mirroring the definitions of \emph{structural observability} and \emph{structural controllability} \citep{Lin74}. Structurally localizable would then mean that there exists a system with the same structure that is localizable.
\end{remark}

Intuitively, it is clear that information from all the other vertices needs to reach each vertex for a system to be localizable everywhere.

\begin{lemma}
If the dependency graph is not strongly connected, then the system is not localizable in every vertex.
\end{lemma}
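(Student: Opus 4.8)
The plan is to prove the statement by exhibiting a \emph{single} vertex in which localizability fails; since \emph{localizable everywhere} was defined as localizability in every vertex, this is enough. I would argue via the Hautus test lemma rather than manipulating $ R $ directly.

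Since $ \mc{G}(A) $ is not strongly connected, there is an ordered pair of distinct vertices $ (\mc[p]{x}, \mc[q]{x}) $ such that no directed path in $ \mc{G}(A) $ runs from $ \mc[p]{x} $ to $ \mc[q]{x} $. Localizability is invariant under relabeling of the vertices, so I may assume $ \mc[q]{x} = \mc[1]{x} $ and show that the system is not localizable in $ \mc[1]{x} $. Let $ T \subseteq \{2, \dots, n\} $ be the set of indices $ i $ for which there is no directed path from $ \mc[i]{x} $ to $ \mc[1]{x} $ in $ \mc{G}(A) $; then $ T \ne \emptyset $ because $ p \in T $. Identifying $ \{2, \dots, n\} $ with the coordinate indices of $ \R^{n-1} $, set $ W = \mspan\{ e_i : i \in T \} $, the coordinate subspace spanned by the standard basis vectors indexed by $ T $.

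The key step is to check, using only the edge convention $ \mc{E} = \{ (\mc[j]{x}, \mc[i]{x}) \mid a_{ij} \ne 0 \} $, that (i) $ W $ is $ A_{22} $-invariant and (ii) $ W \subseteq \ker(a_{12}^\top) $. For (i): if $ j \in T $ and $ a_{ij} \ne 0 $ for some $ i \in \{2,\dots,n\} $, then there is an edge $ \mc[j]{x} \to \mc[i]{x} $, so any path from $ \mc[i]{x} $ to $ \mc[1]{x} $ would extend to one from $ \mc[j]{x} $, contradicting $ j \in T $; hence $ i \in T $, i.e. every column of $ A_{22} $ indexed by $ T $ is supported on $ T $. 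For (ii): $ j \in T $ implies in particular that there is no edge $ \mc[j]{x} \to \mc[1]{x} $, i.e. $ a_{1j} = 0 $, so $ a_{12}^\top e_j = 0 $. Since $ W \ne \{0\} $ is $ A_{22} $-invariant, $ A_{22} $ (over $ \C $ if necessary) has an eigenvector $ x \in W \setminus \{0\} $ with $ A_{22} x = \lambda x $, and $ a_{12}^\top x = 0 $ by (ii). Thus $ \lambda $ is an eigenvalue of $ A_{22} $ for which $ (\lambda I - A_{22}) x = 0 $ and $ a_{12}^\top x = 0 $ hold with $ x \ne 0 $, so by the Hautus test lemma the system is not localizable in $ \mc[1]{x} $, hence not localizable everywhere.

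I do not expect a genuine obstacle; the points needing care are (a) keeping the edge orientation straight---the dependency graph's adjacency matrix is $ A^\top $, so "reaching vertex $ \mc[1]{x} $" is controlled by the columns of $ A_{22} $ and the entries of $ a_{12} $---and (b) noticing that the claim only asks for \emph{some} non-localizable vertex, so it suffices to use the single witnessing pair $ (\mc[p]{x}, \mc[q]{x}) $ and never analyze the full condensation graph. An entirely equivalent route avoids complex eigenvalues: the same set $ T $ shows that $ a_{12}^\top A_{22}^k $ is supported in $ \{2,\dots,n\} \setminus T $ for every $ k $, whence $ \rank(R) \le n - 1 - \abs{T} < n - 1 $ directly from the definition.
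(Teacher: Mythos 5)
Your proof is correct, but it takes a different route from the paper's. The paper argues directly on $R$: non-strong-connectivity means $A$ is reducible, so after a permutation $\widetilde{A} = P^\top A P$ is block-lower-triangular with a $p \times (n-p)$ zero block; the last $n-p$ entries of $\widetilde{a}_{12}$ vanish and the powers $\widetilde{A}_{22}^l$ stay block-lower-triangular, so $n-p \ge 1$ columns of $R$ are identically zero and $\rank(R) < n-1$. You instead isolate the set $T$ of vertices that cannot reach $\mc[1]{x}$, observe that the corresponding coordinate subspace $W$ is $A_{22}$-invariant and lies in $\ker(a_{12}^\top)$, extract an eigenvector of $A_{22}$ from $W$ over $\C$, and invoke the Hautus test lemma. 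The two arguments rest on the same combinatorial fact (a nonempty set of coordinates invisible from vertex $1$), but yours routes it through a different key lemma and has two small advantages: it pinpoints exactly which vertex witnesses the failure without constructing a permutation normal form, and $T$ is the maximal invisible set, so your closing remark even gives the sharper bound $\rank(R) \le n-1-\abs{T}$. The cost is the detour through complex eigenvectors, which the paper's direct computation on $R$ avoids; your final paragraph correctly notes that the same set $T$ yields the zero-columns argument, which is essentially the paper's proof in different notation. One point to keep explicit if you write this up: the claim that localizability in $\mc[q]{x}$ is invariant under relabeling deserves a sentence (the permutation of the remaining $n-1$ coordinates conjugates $A_{22}$ and permutes the columns of $R$, preserving its rank), which is exactly the role the matrix $P$ plays in the paper's version.
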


\begin{proof}
Assume that the underlying graph is not strongly connected. That is, $ A $ is reducible and we can find a permutation $ P $ such that
\begin{equation*}
    \widetilde{A} := P^\top \! A \ts P =
    \begin{bmatrix}
        B_{11} & 0 \\
        B_{21} & B_{22}
    \end{bmatrix},
\end{equation*}
with $ B_{11} \in \R^{p \times p} $, $ B_{22} \in \R^{(n-p) \times (n-p)} $, and $ 1 \le p \le \mbox{$n-1$} $, see \citet{Fie08}. If $p=1$, all entries of $ \widetilde{a}_{12} $ are zero and hence all entries of $R$ are zero. If $ p \ge 2 $, the last $ n-p $ entries of $ \widetilde{a}_{12} $ are zero and $ \widetilde{A}_{22} $ and hence also its powers are block-lower-triangular matrices, where the zero matrix is of size $ (p-1) \times (n-p) $. Multiplying $ \widetilde{a}_{12}^\top $ and $ \widetilde{A}_{22}^l $, with $ l = 0, \dots, n-2 $, results in row vectors containing $ n - p $ zeros so that $ n - p \ge 1 $ columns of the $ R $ matrix associated with $ \widetilde{A} $ are zero. Hence, $ R $ cannot have rank $ n - 1 $ and the system is not localizable everywhere.
\end{proof}

However, strongly connected systems are not necessarily localizable as illustrated in Example~\ref{ex:non-localizable systems}. Let us now assume that the system is localizable in $ \mc[1]{x} $. Setting $ s = n $, we want to find the matrix $ C_n \in \R^{n \times n} $ such that
\begin{equation*}
    \begin{bmatrix}
        u^{(k+1)} \\
        u^{(k+2)} \\
        \vdots \\
        u^{(k+n-1)} \\
        u^{(k+n)}
    \end{bmatrix}
    =
    \underbrace{
    \begin{bmatrix}
        & 1 \\
        & & 1 \\
        & & & \ddots \\
        & & & & 1 \\
        w_0 & w_1 & w_2 & \dots & w_{n-1}
    \end{bmatrix}}_{C_n}
    \begin{bmatrix}
        u^{(k)} \\
        u^{(k+1)} \\
        \vdots \\
        u^{(k+n-2)} \\
        u^{(k+n-1)}
    \end{bmatrix},
\end{equation*}
imposing the structure described in Section~\ref{sec:Koopman}. We have shown in Proposition~\ref{pro:Locally defined system} that such a system exists. Note that the delay embedding is only applied to the first vertex here. We simply estimate the matrix $ C_n $ from the augmented data matrices $ X_n, Y_n \in \R^{n \times (m-n+1)} $ containing only $ u^{(k)} $ trajectory data using DMD.

\subsection{Local estimation of eigenvalues}

%------------------------------------------------------------------------------
\begin{figure*}
    \centering
    \begin{minipage}[t]{0.14\linewidth}
        \centering
        \subfiguretitle{(a)}
        \vspace*{0.6ex}
        \begin{tikzpicture}[
                > = stealth, % arrow head style
                semithick % line style
            ]

            \tikzstyle{every state}=[
                draw = black,
                thick,
                fill = white,
                minimum size = 5mm
            ]

            \node[state] (v1) {1};
            \node[state] (v2) [below=0.6cm of v1] {2};
            \node[state] (v3) [below=0.6cm of v2] {3};
            \node[state] (v4) [right=1.2cm of v1]{4};
            \node[state] (v5) [below=0.6cm of v4] {5};
            \node[state] (v6) [below=0.6cm of v5] {6};

            \path[<->] (v1) edge node {} (v4);
            \path[<->] (v2) edge node {} (v5);
            \path[<->] (v3) edge node {} (v6);

            \path[->] (v4) edge node {} (v2);
            \path[->] (v5) edge node {} (v1);
            \path[->] (v5) edge node {} (v3);
            \path[->] (v6) edge node {} (v2);
        \end{tikzpicture}
    \end{minipage}
    \hspace*{3ex}
    \begin{minipage}[t]{0.2\linewidth}
        \centering
        \subfiguretitle{(b)}
        \vspace*{0.6ex}
        \includegraphics[width=\linewidth]{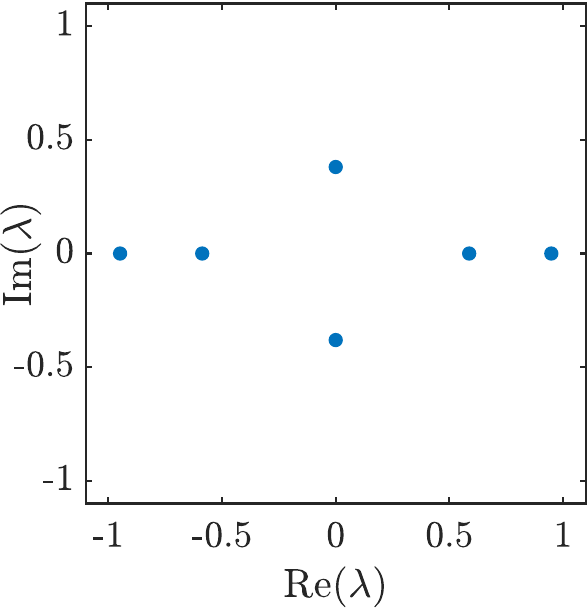}
    \end{minipage}
    \hspace*{3ex}
    \begin{minipage}[t]{0.259\linewidth}
        \centering
        \subfiguretitle{(c)}
        \includegraphics[width=\linewidth]{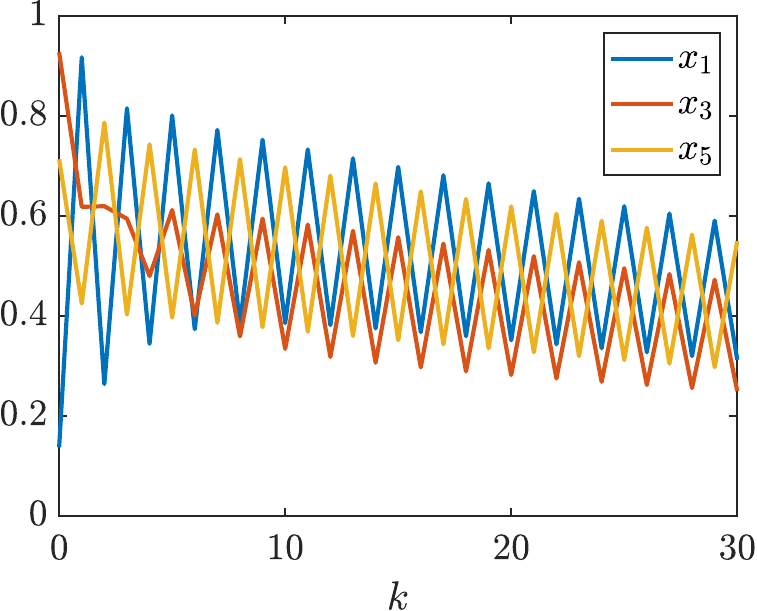}
    \end{minipage}
    \vspace*{-1ex}
    \caption{(a) Bipartite dependency graph of a system that is localizable everywhere. (b)~Eigenvalues of the graph. The spectrum is invariant under multiplication by $ -1 $. (c)~Trajectory data for the variables $ x_1 $, $ x_3 $, and $ x_5 $. Using any of these trajectories, we can determine that the graph is bipartite using only local information.}
    \label{fig:Bipartite graph}
\end{figure*}
%------------------------------------------------------------------------------

In what follows, we are mainly interested in spectral properties of the matrices $ A $ and $ C_n $.

\begin{lemma} \label{lem:Isospectral}
Let the system be localizable in vertex $ \mc[1]{x} $. If $ \lambda $ is an eigenvalue of $ A $, then $ \lambda $ is also an eigenvalue of $ C_n $ and the associated eigenvector is given by
\begin{equation*}
    \xi =
    \begin{bmatrix}
        u & \lambda \ts u & \dots & \lambda^{n-1} \ts u
    \end{bmatrix}^\top,
\end{equation*}
where $ u \ne 0 $.
\end{lemma}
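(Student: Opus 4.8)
The plan is to run the relevant eigenmode of $A$ through the localized model produced by Proposition~\ref{pro:Locally defined system} and read off $\xi$ as a genuine eigenvector of $C_n$. Let $\lambda$ be an eigenvalue of $A$ with associated eigenvector $z = (u, w^\top)^\top$, where $u \in \C$ and $w \in \C^{n-1}$. The step that needs care is to show that $u \ne 0$, since otherwise the candidate eigenvector $\xi$ vanishes. Suppose $u = 0$; then the two block rows of $A z = \lambda z$ collapse to $a_{12}^\top w = 0$ and $A_{22} w = \lambda w$, i.e.\ $w$ lies in the kernel of $\begin{bmatrix} \lambda I - A_{22} \\ a_{12}^\top \end{bmatrix}$. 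By the Hautus test above, localizability in $\mc[1]{x}$ forces $w = 0$, contradicting $z \ne 0$. Hence $u \ne 0$, and in particular $\xi \ne 0$.

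Next I would consider the (complex) trajectory $x^{(k)} = \lambda^k z$, which solves \eqref{eq:Discrete linear system}; its observed component is $u^{(k)} = \lambda^k u$, so the length-$n$ delay window $(u^{(k)}, u^{(k+1)}, \dots, u^{(k+n-1)})^\top$ equals $\lambda^k \xi$. Proposition~\ref{pro:Locally defined system} provides a matrix $C_n$ with the companion structure that advances this delay window by one step along every trajectory of the global system; since the governing recursion has real coefficients, it applies to the complex trajectory as well. Evaluating $C_n\,(u^{(k)}, \dots, u^{(k+n-1)})^\top = (u^{(k+1)}, \dots, u^{(k+n)})^\top$ at $k = 0$ then yields $C_n \xi = \lambda \xi$, and $\xi \ne 0$ makes $\lambda$ an eigenvalue of $C_n$ with eigenvector $\xi$.

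Equivalently, one can verify $C_n \xi = \lambda \xi$ directly from the companion form: its first $n-1$ rows give $(C_n \xi)_i = \xi_{i+1} = \lambda^i u = \lambda\,\xi_i$ automatically, and its last row reduces the claim to $\lambda$ being a root of the characteristic polynomial $\mu^n - \sum_{j=0}^{n-1} w_j \mu^j$, which is exactly what inserting $u^{(k)} = \lambda^k u$ into the scalar recursion $u^{(k+n)} = \sum_{j=0}^{n-1} w_j\,u^{(k+j)}$ of Proposition~\ref{pro:Locally defined system} gives after dividing by $u \ne 0$. The only non-routine point is $u \ne 0$: without localizability, an eigenmode of $A_{22}$ invisible at vertex $\mc[1]{x}$ need not appear in $C_n$, and the Hautus characterization is precisely what rules this out; the remainder is bookkeeping with the companion matrix.
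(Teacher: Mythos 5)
Your proof is correct and follows essentially the same route as the paper: propagate the eigenmode trajectory $x^{(k)} = \lambda^k z$ through the localized system guaranteed by Proposition~\ref{pro:Locally defined system}, so that the delay window $(u^{(k)},\dots,u^{(k+n-1)})^\top = \lambda^k \xi$ is advanced by a factor of $\lambda$, giving $C_n \xi = \lambda \xi$. The one point where you go beyond the paper's proof is the Hautus-test argument that $u \ne 0$: the paper asserts $u \ne 0$ in the statement of the lemma but never justifies it, whereas your observation---that an eigenvector of $A$ with vanishing first component would force its tail $w$ into the kernel of $\bigl[\begin{smallmatrix} \lambda I - A_{22} \\ a_{12}^\top \end{smallmatrix}\bigr]$ for an eigenvalue $\lambda$ of $A_{22}$, which localizability excludes---is exactly the missing step and is worth recording.
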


\begin{proof}
Assume that
\begin{equation*}
    \begin{bmatrix}
        a_{11} & a_{12}^\top \\
        a_{21} & A_{22}
    \end{bmatrix}
    \begin{bmatrix}
        u \\
        v
    \end{bmatrix}
    =
    \lambda
    \begin{bmatrix}
        u \\
        v
    \end{bmatrix}.
\end{equation*}
Setting $ u^{(k)} = u $ and $ v^{(k)} = v $, we get $ u^{(k+1)} = \lambda \ts u $ and consequently $ u^{(k+s)} = \lambda^s \ts u $. We have shown in Proposition~\ref{pro:Locally defined system} that the localized system, given by $ C_n $, correctly describes the evolution of the variable $ u^{(k)} $. It follows that
\begin{equation*}
    C_n
    \begin{bmatrix}
        u \\
        \lambda \ts u \\
        \vdots \\
        \lambda^{n-1} \ts u
    \end{bmatrix}
    =
    \begin{bmatrix}
        \lambda \ts u \\
        \lambda^2 \ts u \\
        \vdots \\
        \lambda^n \ts u
    \end{bmatrix}
    =
    \lambda
    \begin{bmatrix}
        u \\
        \lambda \ts u \\
        \vdots \\
        \lambda^{n-1} \ts u
    \end{bmatrix},
\end{equation*}
which completes the proof.
\end{proof}

\begin{example}
Many important properties of graphs are encoded in its spectrum, see \citet{Brualdi10} for an overview. A directed graph, for instance, is bipartite if and only if its spectrum is invariant under multiplication by $ -1 $. Given local trajectory data, we can now determine if the underlying graph is bipartite using Lemma~\ref{lem:Isospectral}. This is illustrated in Figure~\ref{fig:Bipartite graph}. \exampleSymbol
\end{example}

\begin{lemma}
Let the system be localizable in vertex $ \mc[1]{x} $ and let the characteristic polynomial associated with $ A $ be given by
\begin{equation*}
    p_{\rule{0pt}{1.7ex}A}(\lambda) = \det(\lambda \ts I - A) = \lambda^n + \alpha_{n-1} \ts \lambda^{n-1} + \dots + \alpha_1 \ts \lambda + \alpha_0.
\end{equation*}
The entries $ w_i $ of $ C_n $ are then given by $ w_i = -\alpha_i $, where $ i = 0, \dots, n-1 $, i.e.,
\begin{equation*}
    C_n =
    \begin{bmatrix}
        & 1 \\
        & & 1 \\
        & & & \ddots \\
        & & & & 1 \\
        -\alpha_0 & -\alpha_1 & -\alpha_2 & \dots & -\alpha_{n-1}
    \end{bmatrix}.
\end{equation*}
\end{lemma}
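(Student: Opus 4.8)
The plan is to exploit the fact that $C_n$ has companion form, so that the assertion ``$w_i = -\alpha_i$'' is equivalent to the statement that the characteristic polynomial of $C_n$ equals $p_A$. Rather than matching roots together with multiplicities, I would establish the sharper identity on the bottom row of $C_n$ directly, by combining Proposition~\ref{pro:Locally defined system} (which gives a scalar recursion for $u^{(k)}$), the Cayley--Hamilton theorem (which gives a second scalar recursion for $u^{(k)}$, now with coefficients $-\alpha_i$), and the observation that localizability forces a suitable observability matrix to be invertible, which pins the coefficients down uniquely. Write $e_1 \in \R^n$ for the first standard basis vector, so that $u^{(k)} = e_1^\top x^{(k)} = e_1^\top A^k x^{(0)}$, and let $\mathcal{O} \in \R^{n \times n}$ be the matrix whose $i$-th row is $e_1^\top A^{i-1}$ for $i = 1, \dots, n$, i.e., the observability matrix of the pair $(A, e_1^\top)$.

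The first step is to show that localizability in $\mc[1]{x}$ implies $\rank(\mathcal{O}) = n$. Splitting $x = \begin{bmatrix} x_1 & y^\top \end{bmatrix}^\top$ with $x_1 \in \R$ and $y \in \R^{n-1}$, the equation $\begin{bmatrix} \lambda \ts I - A \\ e_1^\top \end{bmatrix} x = 0$ forces $x_1 = 0$ and reduces to $(\lambda \ts I - A_{22})\ts y = 0$ together with $a_{12}^\top y = 0$; by the Hautus test lemma stated above this has only the trivial solution (for every eigenvalue $\lambda$ of $A_{22}$, and automatically for every other $\lambda$) exactly when the system is localizable in $\mc[1]{x}$. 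Hence localizability is equivalent to $\begin{bmatrix} \lambda \ts I - A \\ e_1^\top \end{bmatrix}$ having full column rank for all $\lambda \in \C$, and, by the standard equivalence between this PBH-type condition and the Kalman observability rank condition, to $\rank(\mathcal{O}) = n$. In particular $\mathcal{O}$ is invertible.

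The second step is the substitution. By Proposition~\ref{pro:Locally defined system}, along every trajectory of \eqref{eq:Discrete linear system} one has $u^{(k+n)} = w_{n-1}\ts u^{(k+n-1)} + \dots + w_0 \ts u^{(k)}$; taking $k = 0$ and letting $x^{(0)}$ range over $\R^n$ while inserting $u^{(i)} = e_1^\top A^i x^{(0)}$ yields the matrix identity $e_1^\top A^n = \sum_{i=0}^{n-1} w_i \ts e_1^\top A^i$. On the other hand, Cayley--Hamilton gives $A^n = -\sum_{i=0}^{n-1} \alpha_i \ts A^i$, hence $e_1^\top A^n = -\sum_{i=0}^{n-1} \alpha_i \ts e_1^\top A^i$. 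Subtracting the two expressions, the row vector $(w_0 + \alpha_0, \ts \dots, \ts w_{n-1} + \alpha_{n-1})$ multiplied by $\mathcal{O}$ is the zero row vector; invertibility of $\mathcal{O}$ then forces $w_i = -\alpha_i$ for $i = 0, \dots, n-1$, which is the claimed form of $C_n$.

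I expect the only non-routine point to be the first step: recognizing that localizability in $\mc[1]{x}$ is nothing but observability of the pair $(A, e_1^\top)$, so that the relevant full-rank object is the observability matrix $\mathcal{O}$ rather than the matrix $R$ in the definition (the two are related, but $\mathcal{O}$ is the one that appears naturally in the substitution). Once that is in place, the remainder is a substitution followed by Cayley--Hamilton. As a consistency check, the resulting $C_n$ is exactly the companion matrix of $p_A$, which is compatible with Lemma~\ref{lem:Isospectral} and, in addition, accounts for eigenvalue multiplicities.
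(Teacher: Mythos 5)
Your proof is correct, but it takes a genuinely different route from the paper's. The paper argues via Lemma~\ref{lem:Isospectral}: every eigenvalue of $ A $ is an eigenvalue of $ C_n $, hence (so the paper asserts) the two $ n \times n $ matrices share their characteristic polynomial, and comparing with the known characteristic polynomial $ \lambda^n - w_{n-1} \ts \lambda^{n-1} - \dots - w_0 $ of a companion matrix identifies $ w_i = -\alpha_i $. You instead establish the coefficient identity directly: the scalar recursion from Proposition~\ref{pro:Locally defined system}, valid for every initial condition, gives $ e_1^\top A^n = \sum_{i} w_i \ts e_1^\top A^i $; Cayley--Hamilton gives $ e_1^\top A^n = -\sum_{i} \alpha_i \ts e_1^\top A^i $; and invertibility of the observability matrix of the pair $ (A, e_1^\top) $ --- which you correctly show is equivalent, via the Hautus test, to localizability in $ \mc[1]{x} $, i.e., to observability of $ (A_{22}, a_{12}^\top) $ --- forces the two coefficient vectors to coincide. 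What your approach buys is rigor on the multiplicity question you flag: Lemma~\ref{lem:Isospectral} only shows that the spectra coincide as sets, which pins down the characteristic polynomial of $ C_n $ only when $ A $ has $ n $ distinct eigenvalues, whereas your argument handles repeated eigenvalues with no extra work and additionally shows that the $ w_i $ in Proposition~\ref{pro:Locally defined system} are uniquely determined. What the paper's approach buys is brevity and the reuse of a result it has already proved.
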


\begin{proof}
The eigenvalues of $ A $ and $ C_n $ are identical as shown in Lemma~\ref{lem:Isospectral} and thus also their characteristic polynomials. The characteristic polynomial associated with the companion matrix $ C_n $ is
\begin{equation*}
    p_{\rule{0pt}{1.7ex}C_n}(\lambda) = \det(\lambda \ts I - C_n) = \lambda^n - w_{n-1} \ts \lambda^{n-1} - \dots - w_1 \ts \lambda - w_0,
\end{equation*}
see, e.g., \citet{GVL13}.
\end{proof}

Note that the last entry of the vector $ C_n \ts \xi $ in Lemma~\ref{lem:Isospectral} is then given by
\begin{equation*}
    -\sum_{l=0}^{n-1} \alpha_l \ts \lambda^l \ts u = \lambda^n \ts u
\end{equation*}
since $ p_{\rule{0pt}{1.7ex}A}(\lambda) = \lambda^n + \alpha_{n-1} \ts \lambda^{n-1} + \dots + \alpha_1 \ts \lambda + \alpha_0 = 0 $ if $ \lambda $ is an eigenvalue.

\begin{example}
For $ n = 2 $, we obtain
\begin{equation*}
    C_2 =
    \begin{bmatrix}
        0 & 1 \\
        -\det(A) & \tr(A)
    \end{bmatrix}
\end{equation*}
since
\begin{equation*}
    p_{\rule{0pt}{1.8ex}C_2}(\lambda) = \lambda^2 - (a_{11} + a_{22}) \lambda + a_{11} \ts a_{22} - a_{12} \ts a_{21}. \tag*{\exampleSymbol}
\end{equation*}
\end{example}

Indeed, for arbitrary $ n \in \mathbb{N} $, it holds that $ w_{n-1} = \tr(A) $ and $ w_0 = (-1)^{n+1} \det(A) $. That is, we can also compute the trace and determinant of the system locally. We could, of course, also directly compute these properties using the eigenvalues of $ C_n $, i.e., the trace is the sum and the determinant the product of the eigenvalues.

\subsection{Local estimation of eigenvectors}

%------------------------------------------------------------------------------
\begin{figure*}
    \centering
    \begin{minipage}[t]{0.26\linewidth}
        \centering
        \subfiguretitle{(a)}
        \vspace*{1.5ex}
        \resizebox{\textwidth}{!}{%
        \begin{tikzpicture}[
                > = stealth, % arrow head style
                semithick % line style
            ]

            \tikzstyle{every state}=[
                draw = black,
                thick,
                fill = white,
                minimum size = 8mm
            ]

            \def\factor{3}
            \node[state, fill=blue!40]  at (\factor*-1.0, \factor*-0.5) (1) {1};
            \node[state, fill=blue!40]  at (\factor*-1.5, \factor* 0.0) (2) {2};
            \node[state, fill=blue!40]  at (\factor*-1.0, \factor* 0.5) (3) {3};
            \node[state, fill=blue!40]  at (\factor*-2.0, \factor* 0.5) (4) {4};
            \node[state, fill=blue!40]  at (\factor*-2.0, \factor*-0.5) (5) {5};
            \node[state, fill=red!60]   at (\factor* 2.0, \factor*-0.5) (6) {6};
            \node[state, fill=red!60]   at (\factor* 1.0, \factor* 0.5) (7) {7};
            \node[state, fill=red!60]   at (\factor* 1.0, \factor*-0.5) (8) {8};
            \node[state, fill=red!60]   at (\factor* 1.5, \factor* 0.0) (9) {9};
            \node[state, fill=red!60]   at (\factor* 2.0, \factor* 0.5) (10) {10};
            \node[state, fill=green!50] at (\factor* 0.0, \factor*-1.5) (11) {11};
            \node[state, fill=green!50] at (\factor*-0.5, \factor*-2.0) (12) {12};
            \node[state, fill=green!50] at (\factor* 0.5, \factor*-1.0) (13) {13};
            \node[state, fill=green!50] at (\factor*-0.5, \factor*-1.0) (14) {14};
            \node[state, fill=green!50] at (\factor* 0.5, \factor*-2.0) (15) {15};

            \path[-] (1) edge (2)
                     (1) edge (3)
                     (1) edge (5)
                     (2) edge (3)
                     (2) edge (4)
                     (4) edge (5)
                     (6) edge (8)
                     (6) edge (9)
                     (6) edge (10)
                     (7) edge (8)
                     (7) edge (9)
                     (7) edge (10)
                     (8) edge (9)
                     (9) edge (10)
                     (11) edge (12)
                     (11) edge (13)
                     (11) edge (14)
                     (12) edge (14)
                     (12) edge (15)
                     (13) edge (14)
                     (13) edge (15);
             \path[-, dashed] (1) edge (14)
                              (8) edge (13)
                              (3) edge (7);
        \end{tikzpicture}}
    \end{minipage}
    \hspace*{3ex}
    \begin{minipage}[t]{0.26\linewidth}
        \centering
        \subfiguretitle{(b)}
        \vspace*{0.55ex}
        \includegraphics[width=\linewidth]{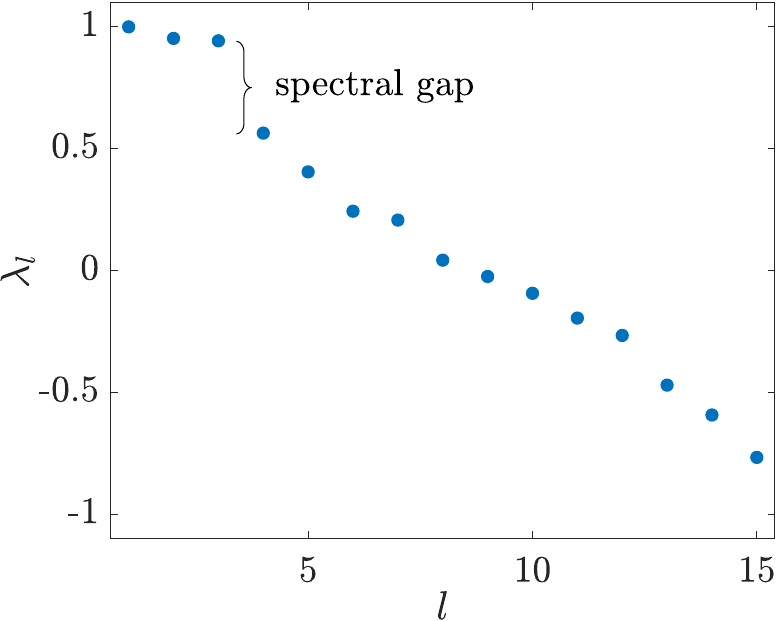}
    \end{minipage}
    \hspace*{3ex}
    \begin{minipage}[t]{0.26\linewidth}
        \centering
        \subfiguretitle{(c)}
        \includegraphics[width=0.97\linewidth]{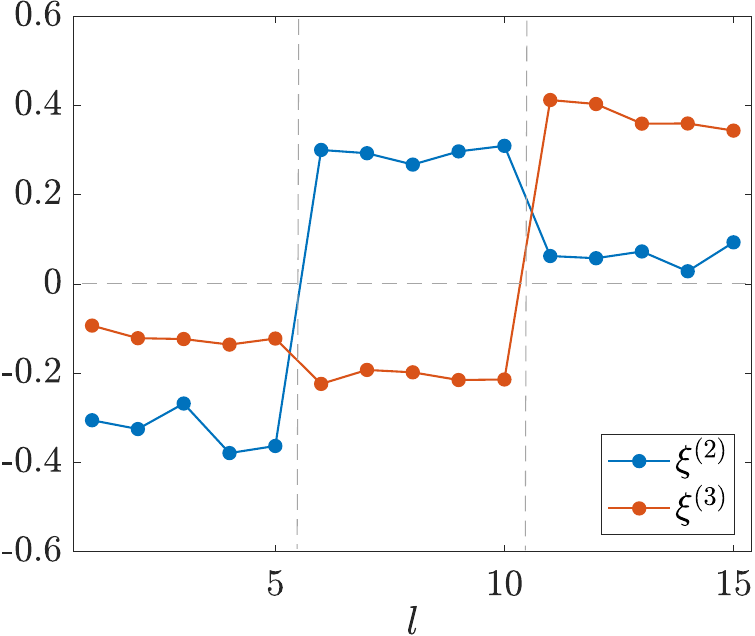}
    \end{minipage}
    \vspace*{-0.7ex}
    \caption{(a)~Decentralized spectral clustering of a randomly generated graph with three clusters. Dashed edges have a lower edge weight. Each vertex assigns itself to a cluster using the signs of the locally computed components of the second and third eigenvectors, where blue represents \texttt{--}, red \texttt{+-}, and green \texttt{++}.   (b)~Eigenvalues of the associated normalized graph Laplacian. (c) Second and third eigenvectors of the graph Laplacian.}
    \label{fig:Block graph}
\end{figure*}
%------------------------------------------------------------------------------

The question is whether it is also possible to obtain information about properties of eigenvectors using only local trajectory data. For the full system, assuming $ A $ is diagonalizable, i.e., $ A = \Xi \ts \Lambda \ts \Xi^{-1} $, where $ \Xi = [\xi^{(1)}, \dots, \xi^{(n)}] $ contains the eigenvectors and $ \Lambda = \diag(\lambda_1, \dots, \lambda_n) $ is a diagonal matrix containing the eigenvalues, we have
\begin{align*}
    x^{(k)} &= A^k \ts x^{(0)} = \Xi \ts \Lambda^k \ts \underbrace{\Xi^{-1} x^{(0)}}_{:=z} = \Xi \ts \Lambda^k \ts z \\
        &= \Xi \ts \diag(z) \begin{bmatrix} \lambda_1^k \\ \vdots \\ \lambda_n^k \end{bmatrix} = \begin{bmatrix} z_1 \ts \xi^{(1)} & \dots & z_n \ts \xi^{(n)} \end{bmatrix} \begin{bmatrix} \lambda_1^k \\ \vdots \\ \lambda_n^k \end{bmatrix}.
\end{align*}
Considering only the first variable of $ x^{(k)} $, i.e., $ u^{(k)} $, this yields
\begin{equation*}
    \begin{bmatrix}
        u^{(0)} \, \dots \, u^{(m-1)}
    \end{bmatrix}
    =
    \begin{bmatrix}
        z_1 \ts \xi_1^{(1)} \, \dots \, z_n \ts \xi_1^{(n)}
    \end{bmatrix}
    \begin{bmatrix}
        1      & \lambda_1 & \dots  & \lambda_1^{m-1} \\
        1      & \lambda_2 & \dots  & \lambda_2^{m-1} \\
        \vdots & \vdots    & \ddots & \vdots          \\
        1      & \lambda_n & \dots  & \lambda_n^{m-1}
    \end{bmatrix}.
\end{equation*}
For $ m > n $, this is a regression problem, which can be solved locally since the trajectory data $ u^{(k)} $ is known and the eigenvalues $ \lambda_l $ can be computed in each vertex as shown above. That is, we can locally compute the first component of each eigenvector up to the multiplicative constants $ z_l $, which are determined by the initial condition $ x^{(0)} $. This can be applied in the same way to all the other vertices. A similar result---based on a spectral decomposition of the signal---was shown in \citet{ZKS22} for a wave equation-based decentralized clustering approach for undirected graphs. We can now also, for example, detect the number of clusters---by looking for spectral gaps---and compute the sign structure of dominant eigenvectors in order to assign vertices to clusters using only local information.

\begin{example}
We generate a simple graph using the stochastic block model and simulate the resulting dynamical system, where $ A $ is now the normalized graph Laplacian~$ L $ of the system shown in Figure~\ref{fig:Block graph}. Based only on local trajectory data, each vertex computes the eigenvalues---the spectral gap implies that the graph can be decomposed into three weakly coupled clusters---and its local eigenvector entries and assigns itself to one of the three clusters. \exampleSymbol
\end{example}

The decentralized clustering algorithm proposed in \citet{ZKS22} relies on a discretization of the wave equation $ u_{tt} = c^2 \Delta u $ on a graph. The corresponding linear system can be written as
\begin{equation*}
    x^{(k+1)} =
    \begin{bmatrix}
        2 \ts I - c^2 L ~~ & -I \\
        I & 0
    \end{bmatrix}
 \ts x^{(k)},
\end{equation*}
where $ c $ is the wave speed and $ L $ again the normalized graph Laplacian. If $ 0 < c < \sqrt{2} $, then the eigenvalues of the matrix all lie on the unit circle. The advantage of this formulation, which can be regarded as a special case of our framework, is that the eigenvalues, unlike in the above example, are not damped out.

\subsection{Extensions to nonlinear dynamical systems}

The Koopman operator allows us to represent nonlinear dynamical systems by an associated infinite-dimensional but linear operator. Provided that finite-dimensional Koopman-invariant subspaces exist, we can obtain a matrix representation of the system that can be reconstructed locally using time-delay embeddings.

\begin{example}
Let us consider a network of $ d = 4 $ discrete nonlinear dynamical systems, where the dynamics of the uncoupled subsystems are given by $ \phi_i \colon \R^2 \to \R^2 $, with
\begin{equation*}
    \phi_i(x_i) =
    \begin{bmatrix}
        \alpha_i \ts x_{i,1} + \beta_i (x_{i,2}^3 - x_{i,2}) \\
        \gamma_i \ts x_{i,2}
    \end{bmatrix}.
\end{equation*}
The global dynamics are then defined by
\begin{equation*}
    \Phi_i(x_1, \dots, x_d) = \phi_i(x_i) + \varepsilon \sum_{j=1}^d S_{ij} \begin{bmatrix} x_{j,1} \\ 0 \end{bmatrix},
\end{equation*}
for $ i = 1, \dots, d $. We choose the coupling structure shown in Figure~\ref{fig:Coupled cell system}\ts(a), which is given by the matrix
\begin{equation*}
    S =
    \begin{bmatrix}
        0 & 1 & 0 & 0 \\
        0 & 0 & 1 & 1 \\
        0 & 0 & 0 & 1 \\
        1 & 0 & 0 & 0
    \end{bmatrix}.
\end{equation*}
By introducing a new variable $ x_{i,3} := x_{i,2}^3 $, the subsystem dynamics become linear. This is equivalent to applying EDMD using a dictionary containing $ \{ x_{i,1}, x_{i,2}, x_{i,2}^3 \} $. We obtain the augmented system $ \phi_{i}(\bar{x}_i) = A_i \ts \bar{x}_i $, with $ \bar{x}_i = [x_{i,1}, x_{i,2}, x_{i,3}]^\top $ and
\begin{equation*}
    A_i =
    \begin{bmatrix}
        \alpha_i & -\beta_i & \beta_i \\
        0 & \gamma_i & 0 \\
        0 & 0 & \gamma_i^3
    \end{bmatrix}.
\end{equation*}
Due to the linear coupling, we can then write the global dynamics as a twelve-dimensional linear system, whose structure is shown in Figure~\ref{fig:Coupled cell system}\ts(b). For the numerical simulation, we set $ \varepsilon = 0.1 $ and randomly choose parameters $ \alpha_i \in [-1, 0] $, $ \beta_i \in [1, 2] $, and $ \gamma_i \in [-1, 0] $. The resulting system is localizable. We can thus rewrite the dynamics so that only local information is used to evolve the system dynamics. The trajectory for $ x_{1, 1} $ is shown in Figure~\ref{fig:Coupled cell system}\ts(c), where we compare the solutions obtained by simulating the original nonlinear system and the locally determined linear system. \exampleSymbol

\begin{figure*}
    \centering
    \begin{minipage}[t]{0.17\linewidth}
        \centering
        \subfiguretitle{(a)}
        \vspace*{2ex}
        \resizebox{1\textwidth}{!}{%
        \begin{tikzpicture}[
                > = stealth, % arrow head style
                semithick % line style
            ]

            \tikzstyle{every state}=[
                shape = rectangle,
                draw = black,
                thick,
                fill = white,
                minimum size = 6mm
            ]

            \node[state] (v1) {1};
            \node[state] (v2) [right=1.8cm of v1] {2};
            \node[state] (v4) [below=1.8cm of v1] {4};
            \node[state] (v3) [right=1.8cm of v4] {3};

            \path[->] (v2) edge node {} (v1);
            \path[->] (v3) edge node {} (v2);
            \path[->] (v4) edge node {} (v2);
            \path[->] (v4) edge node {} (v3);
            \path[->] (v1) edge node {} (v4);
        \end{tikzpicture}}
    \end{minipage}
    \hspace*{3ex}
    \begin{minipage}[t]{0.205\linewidth}
        \centering
        \subfiguretitle{(b)}
        \vspace*{0.6ex}
        \includegraphics[width=\linewidth]{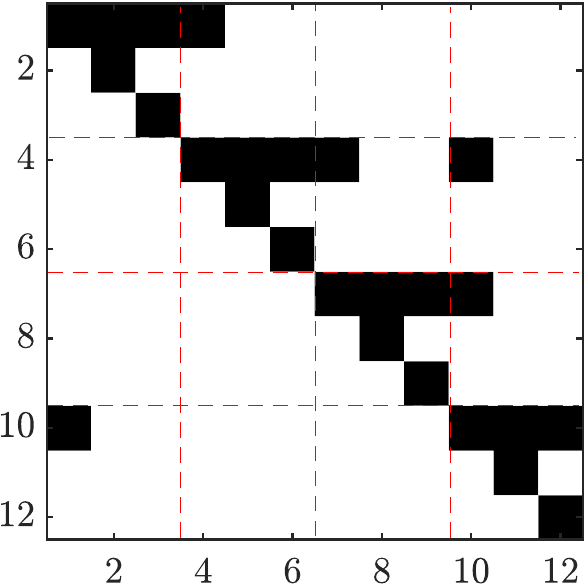}
    \end{minipage}
    \hspace*{3ex}
    \begin{minipage}[t]{0.30\linewidth}
        \centering
        \subfiguretitle{(c)}
        \includegraphics[width=\linewidth]{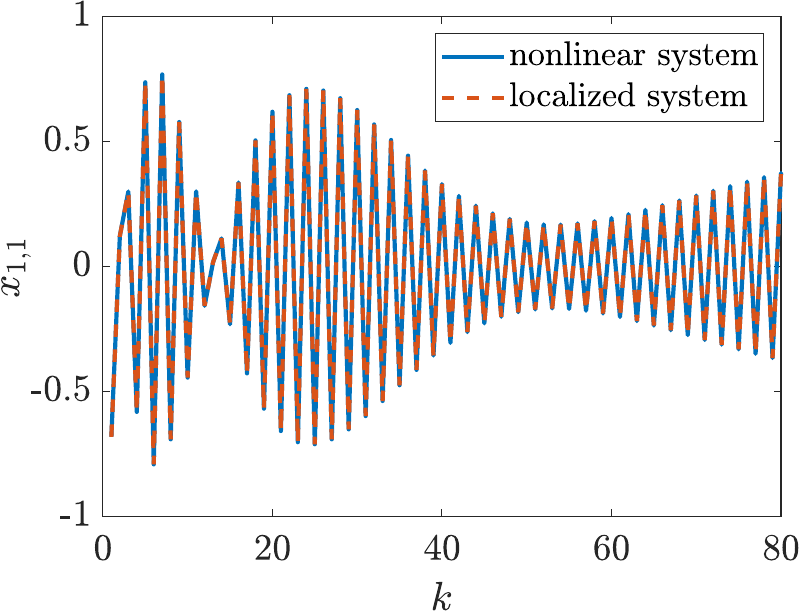}
    \end{minipage}
    \vspace*{-1ex}
    \caption{(a) Coupling structure $ S $ of the system. Each vertex represents a two-dimensional nonlinear dynamical system. (b) Structure of the augmented linear system. (c)~Evolution of $ x_{1, 1} $ computed by evaluating the original nonlinear system and the corresponding localized system learned from trajectory data.}
    \label{fig:Coupled cell system}
\end{figure*}

\end{example}

The example illustrates that by first embedding the dynamics into a higher-dimensional feature space and then using delay embedding techniques, we can derive a new dynamical system that depends only on local information, but still captures the global dynamics. Alternatively, we could directly apply delay embedding techniques to the nonlinear system, but would then not a priori know the number of delays required to faithfully reconstruct the local dynamics. Finding Koopman-invariant subspaces, provided they exist, is in general a challenging problem.

\section{Conclusion}

We have shown how local delay embeddings, combined with data-driven methods to learn the localized dynamics, can be used to detect global properties of linear and simple nonlinear dynamical systems. Although we cannot expect to be able to predict the behavior of more complex nonlinear systems over long time periods, it might still be possible to make short-term predictions that can be used within a model-predictive control framework. While we considered only discrete dynamical systems, the proposed approach can be easily extended to linear ordinary differential equations $ \dot{x} = B \ts x $ and also simple nonlinear systems by embedding the dynamics into a higher-dimensional feature space. If we, for instance, use a Runge--Kutta integrator to generate time-series data, this results in a discrete dynamical system, with $ A \approx \exp(h \ts B) $, where $ h $ is the step size of the integrator. Note, however, that higher-order integrators and implicit integrators might destroy the sparsity of the system. An interesting application would be the analysis of consensus networks. Is it possible to predict and then also control, e.g., the consensus value locally by adapting the coupling strengths? There are currently several limitations that might hamper the applicability of the proposed methods: Since the number of delays required to guarantee that the system is localizable depends on the number of vertices, applying this approach to large-scale networks would require infeasibly long trajectories. Furthermore, the resulting matrices might be ill-conditioned. Is it possible to lower the number of time-delay embeddings depending on the diameter of the graph to ensure that information from each vertex has reached every other vertex? Furthermore, it is not clear how sensitive the estimation of the spectral properties is with respect to noisy observations or measurement errors. These questions will be addressed in future work.

\bibliography{LDE}

\newpage

\onecolumn

\section*{Appendix}

\appendix

\textit{Proof of Proposition \ref{pro:Locally defined system}:}
We have
\begin{align*}
    u^{(k+1)} &= a_{11} \ts u^{(k)} + a_{12}^\top \ts v^{(k)}, \\
    v^{(k+1)} &= a_{21} \ts u^{(k)} + A_{22} \ts v^{(k)},
\end{align*}
so that
\begin{align*}
    u^{(k+2)} &= a_{11} \ts u^{(k+1)} + a_{12}^\top \ts v^{(k+1)} \\
              &= a_{11} \ts u^{(k+1)} + a_{12}^\top \ts a_{21} \ts u^{(k)} + a_{12}^\top A_{22} \ts v^{(k)}.
\end{align*}
Continuing this process, we obtain
\begin{align*}
    u^{(k+s)} &= a_{11} \ts u^{(k+s-1)} + \sum_{l=0}^{s-2} a_{12}^\top \ts A_{22}^l \ts a_{21} \ts u^{(k+s-2-l)} \\ &~~ + a_{12}^\top \ts A_{22}^{s-1} v^{(k)}.
\end{align*}
We write the first $ n - 1 $ equations as \\
\begin{equation*}
    \underbrace{
    \begin{bmatrix}
        a_{12}^\top \\[0.5ex]
        a_{12}^\top \ts A_{22} \\[1ex]
        \vdots \\[1ex]
        a_{12}^\top \ts A_{22}^{n-2} \\[2ex]
    \end{bmatrix}}_{=R}
    v^{(k)}
    =
    \underbrace{
    \begin{bmatrix}
        u^{(k+1)} - a_{11} \ts u^{(k)} \\
        u^{(k+2)} - a_{11} \ts u^{(k+1)} - a_{12}^\top \ts a_{21} \ts u^{(k)} \\
        \vdots \\
        \displaystyle u^{(k+n-1)} - a_{11} \ts u^{(k+n-2)} - \sum_{l=0}^{n-3} a_{12}^\top \ts A_{22}^l \ts a_{21} \ts u^{(k+n-3-l)}
    \end{bmatrix}}_{=:b^{(k)}}.
\end{equation*}
In order to obtain $ v^{(k)} $ using only $ u^{(k)} $ time-series data, we have to solve the system of linear equations. Since $ \rank(R) = n - 1 $ by assumption, the solution is unique and $ v^{(k)} $ can be written in terms of the variables $ u^{(k)}, \dots, u^{(k+n-1)} $. Plugging this expression for $ v^{(k)} $ into the formula for $ u^{(k+n)} $, we obtain a new discrete linear dynamical system that does not depend on $ v^{(k)} $. That is, the dynamics now depend only on $ u^{(k)}, \dots, u^{(k+n-1)} $.

\end{document}